\documentclass[12pt]{amsart}
\usepackage{amsmath,amssymb,latexsym,tikz}
\usepackage{mathrsfs,amsfonts}
\usepackage[colorlinks=true,urlcolor=blue,
citecolor=red,linkcolor=blue,linktocpage,pdfpagelabels,
bookmarksnumbered,bookmarksopen]{hyperref}
\usepackage[english]{babel}

\usepackage[left=3cm,right=3cm,top=3cm,bottom=3cm]{geometry}

\usepackage[hyperpageref]{backref}
\usepackage{mathrsfs}
\usepackage{lmodern}

\numberwithin{equation}{section}

\newtheorem{theorem}{Theorem}[section]
\newtheorem{lemma}[theorem]{Lemma}

\newtheorem{proposition}[theorem]{Proposition}

\theoremstyle{definition}

\newcommand{\R}{{\mathbb R}}

\newcommand{\eps}{\varepsilon}

\newcommand{\weakstarto}{\stackrel{*}{\rightharpoonup}}

\newcommand{\pnorm}[2][]{\if #1'' \left|#2\right|_p \else \left|#2\right|_{#1} \fi}

\newcommand{\cal}{\mathcal}

\def\XXint#1#2#3{{\setbox0=\hbox{$#1{#2#3}{\int}$ }
\vcenter{\hbox{$#2#3$ }}\kern-.6\wd0}}

\title[Quantitative truncation estimates for Hardy-Sobolev optimizers]{Quantitative truncation estimates for fractional Hardy-Sobolev optimizers}

\author[S.A. Marano]{S.~A.\,  Marano}
\address[S.A. Marano]{Dipartimento di Matematica e Informatica
\newline\indent
Universit\`a degli Studi di Catania
\newline\indent
Viale A. Doria 6 I-95125 Catania, Italy}
\email{marano@dmi.unict.it}

\author[S. Mosconi]{S.\,  Mosconi}
\address[S. Mosconi]{Dipartimento di Matematica e Informatica
\newline\indent
Universit\`a degli Studi di Catania
\newline\indent
Viale A. Doria 6 I-95125 Catania, Italy}
\email{mosconi@dmi.unict.it}

\subjclass[2010]{6E35, 35B40, 49K22}

\keywords{Fractional Hardy-Sobolev inequality, decay estimates, fractional $p$-Laplacian}

\thanks{Work performed within PTR 2018-2020 - linea di intervento 2: `Metodi Variazionali ed Equazioni Differenziali' of the University of Catania and partly funded by Research project of MIUR (Italian Ministry of Education, University and Research) Prin 2017 `Nonlinear Differential Problems via Variational, Topological and Set-valued Methods' (Grant Number 2017AYM8XW)}

\begin{document}
\begin{abstract}
The general stability problem of truncations for a family of functions concentrating mass at the origin is described and a concrete example in the framework of entire optimizers for the fractional Hardy-Sobolev inequality is given. In this short note we point out some quantitative stability estimates, useful in dealing with critical $p-q$ fractional equations. 
\end{abstract}
\maketitle
\section{Introduction and main results}

In the last years, a great deal of research has grown around multi-dimensional fractional differential problems of the form
\begin{equation}\label{model}
{\cal K} u=f(x,u),
\end{equation}
where ${\cal K}$ denotes a suitably defined elliptic fractional non-local operator. A general model for {\em linear} ${\cal K}$ is 
\[
{\cal K} u (x)={\sf p.\,v.}\int_{\R^{N}}K(x, y) (u(x)-u(y))\, dy, \qquad K(x, y)\simeq |x-y|^{N+\sigma},
\]
while the main example in the nonlinear setting reads as
\begin{equation}
\label{def}
(-\Delta_{p})^{s}u:=\frac{1}{p}{\sf d}[u]_{s,p}^{p},\qquad [u]_{s, p}^{p}=\iint_{\R^{N}\times\R^{N}}\frac{|u(x)-u(y)|^{p}}{|x-y|^{N+p\, s}}\, dx\, dy\, ,
\end{equation}
with ${\sf d}$ being the Fr\'echet differential. Both families encompass the celebrated fractional Laplacian as a special case. 

A large part of this research concerns existence and multiplicity of solutions to \eqref{model}. Indeed, modern non-linear analysis provides a lot of relatively abstract machinery to get such kind of results, and the general schemes of proof usually work once two sets of conditions are met. The first can be called {\em lower order} set of assumptions, as it mainly relates to the right-hand side of \eqref{model}, having little to do with the nature of the driving operator, except for the parameters that define it. Some examples are sub-criticality, sub/super-linearity, or Ambrosetti-Rabinowitz conditions, which are often explicitly imposed in the literature. The second set of conditions is the {\em leading order} one, and it pertains (sometimes subtle) properties of ${\cal K}$ alone, such as the corresponding regularity theory or relevant functional analytic embeddings. Needless to say, the more interesting applications of non-linear analysis in the fractional framework are those where some leading order assumption fails. Indeed, the true nature of
${\cal K}$ lies in what distinguishes it from the usual elliptic differential operators, and an extended discussion of such differences as well as related literature can be found in \cite{MSP}.

Let us now describe a meaningful feature of problems such as \eqref{model} from the functional analytic point of view. If $\Omega$ is a smooth subset of $\R^{N}$ and $0<s<1<p<N/s$ then $(-\Delta_{p})^{s}$ naturally acts on the fractional Sobolev space $W^{s, p}_{0}(\Omega)$, namely the space of all measurable $u:\R^{N}\to \R$ supported in $\overline\Omega$, vanishing at infinity
\footnote{which means $|\{x\in\Omega:|u(x)|>\eps\}|<+\infty$ for every $\eps>0$. This additional condition caters technical issues when $\Omega$ is unbounded.},
and such that the norm $[u]_{s,p}$ in \eqref{def} is finite. In many aspects, the parameter $s$ plays the r\^ole of a differentiability scale, while $p$ prescribes the summability of the $s$-fractional derivative. A suggestive notation giving meaning to this statement consists in defining the $s$-fractional incremental ratio of $u$ and the singular measure $\mu$ on $\R^{2N}$ as
\[
|D^{s}u|(x, y)=\frac{|u(x)-u(y)|}{|x-y|^{s}},\qquad d\mu=|x-y|^{-N}\, dx\, dy,
\]
respectively, so that
\[
[u]_{s, p}=\|D^{s}u\|_{L^{p}(\R^{2N}, d\mu)}.
\]
Notice that since $\mu(\R^{2N})=+\infty$, $L^{p}(\R^{2N}, d\mu)$ does not embed into $L^{q}(\R^{2N}, d\mu)$ for $q<p$ and, accordingly, $W^{s, p}_{0}(\R^{N})$ does not embed into $W^{s,q}_{0}(\R^{N})$. So far so good, as the latter embedding also fails for classical (non-fractional) Sobolev spaces. However, when $\Omega$ is bounded, H\"older's inequality entails
\begin{equation}\label{emb}
W^{1,p}_{0}(\Omega)\hookrightarrow W^{1,q}_{0}(\Omega)\qquad \text{for any $p> q$},
\end{equation}
which led H. Br\'ezis to ask whether such an embedding holds true also at the fractional level for {\em bounded} smooth domains. Surprisingly enough, Mironescu and Sickel \cite{MS} proved that the fractional Sobolev version of \eqref{emb} actually fails even in a set theoretic sense. Notice that $H^{s, p}_{0}(\Omega)$ actually embeds into $H^{s, q}_{0}(\Omega)$ if $H^{s, p}$ denotes the fractional Bessel potential space. So, when $p> q$, the mixed energy functional
\begin{equation}\label{spq}
J(u)=\| D^{s}u\|_{L^{p}(\R^{2N}, d\mu)}^{p}+\|D^{s}u\|_{L^{q}(\R^{2N}, d\mu)}^{q}
\end{equation}
is well defined and smooth in a space which is smaller than $W^{s, p}_{0}(\Omega)$ and is therefore more delicate to treat with respect to the classical one
\[
J(u)=\|D u\|_{L^{p}(\R^{N})}^{p}+\|D u\|_{L^{q}(\R^{N})}^{q}.
\]
In the non-fractional case $J$ gives rise to the so-called {\em $p$-$q$ Laplacian}, which serves as a model for many applications; see the survey \cite{MM1} and the references therein. The previous discussion highlights that studying its fractional counterpart (given by the differential of \eqref{spq}) requires more care and, sometimes, completely different techniques.

A meaningful item is the problem of {\em quantitative truncation estimates}. Let us describe it in broad (and somehow vague) terms.  Given a function space $X\subseteq L^{1}_{{\rm loc}}(\R^{N})$, (that is $W^{s,p}_{0}(\R^{N})\cap W^{s,q}_{0}(\R^{N})$ in our example), consider a family of non-negative functions $\{U_{\eps}\}_{\eps}\subseteq X$ concentrating at the origin, i.e., $U_{\eps}\, dx\weakstarto\mu$ as $\eps\to 0^+$, with $\mu\preccurlyeq \delta_{0}$. A {\em truncation} of $\{U_{\eps}\}_{\eps}$ in  $B_{\delta}$ is a family $\{U_{\eps, \delta}\}_{\eps}\subseteq X$ fulfilling
\begin{equation}
\label{ctrunc}
{\rm supp}(U_{\eps, \delta})\subseteq B_{2\delta},\qquad {\rm supp} (U_{\eps, \delta}-U_{\eps})\subseteq \R^{N}\setminus B_{\delta}.
\end{equation}
A {\em quantitative truncation estimate} for a functional $I:X\to\R$ is an explicit first-order asymptotic analysis, as $\eps,\delta\to 0^+$ of $I(U_{\eps, \delta})$: one usually defines $I_{0}$ taking appropriate limits of $I(U_{\eps, \delta})$ and aims at finding explicit bounds (from below, above, or both) for $I(U_{\eps, \delta})- I_{0}$.  Clearly, there are many ways to truncate a family of concentrating functions, and each one produces, in principle, different truncation estimates. When $X$ is a $C^{\infty}_{c}(\R^{N})$-modulus, the {\em truncation by multiplication} looks the most natural: pick any $\varphi\in C^{\infty}_{c}(B_{2})$ such that $\varphi\equiv 1$ on $B_{1}$ and put
\[
U_{\eps,\delta}(x)=\varphi\left(\frac{x}{\delta}\right)\, U_{\eps}(x).
\]
Sometimes the second condition in \eqref{ctrunc} can be weakened or even completely dropped, and general projection operators $\pi_{\delta}: X\to X_{\delta}$, where $X_\delta=\{u\in X:{\rm supp}(u)\subseteq B_{2\delta}\}$, considered. We will not dwell on details of other methods, but rather focus on the particular concrete setting we are going to investigate. 

The fractional Hardy-Sobolev inequality reads as
\begin{equation}\label{HS}
\left(\int_{\R^N} \frac{|u|^r}{|x|^\alpha}\, dx\right)^{\frac{1}{r}}\leq C\left(\int_{\R^{2N}}|D^{s}u|^{p}\, d\mu\right)^{\frac{1}{p}}.
\end{equation}
Here, $p>1$, $s\in \, ]0, 1[$, $0\le\alpha\le p\, s<N$, and $r$ is dictated by scaling through
\begin{equation}\label{scal}
\frac{N-\alpha}{r}=\frac{N-p\, s}{p}.
\end{equation}
Every function that realizes the optimal constant in \eqref{HS} is called an Aubin-Talenti function. By analogy with the local case, which formally corresponds to $s=1$, it is conjectured that the Aubin-Talenti functions, up to constant multiples, rescaling and possible (in the case $\alpha=0$) translations, are
\begin{equation}\label{AT}
 U(x)=(1+|x|^{\frac{p-\alpha/s}{p-1}})^{\frac{p\, s-N}{p-\alpha/s}}.
 \end{equation}
If $\alpha<ps$ then they can be obtained by solving the minimization problem
\begin{equation}\label{S}
0<{\cal S}=\inf\left\{\frac{[u]_{s,p}^p}{\|u\|_{r,\alpha}^{p}}:0<\|u\|_{r,\alpha}<+\infty\right\},\quad\text{where}\; \|u\|_{r, \alpha}=\left(\int_{\R^N} \frac{|u|^r}{|x|^\alpha}\, dx\right)^{\frac{1}{r}}
\end{equation}
via concentration-compactness. Some basic properties of the minimizers are described below.
\begin{proposition}[\cite{MM}, Theorem 1.1]
\label{propmin}
Let $p>1$, $s\in \ ]0, 1[$, $0\le \alpha<p\, s<N$, and $r$ satisfy \eqref{scal}. Then \eqref{S}  is solvable and its minimizers $U$ are bounded continuous functions of strict constant sign. The positive ones turn out (eventually after translation in the case $\alpha=0$) radial and radially non-increasing. They obey the decay estimate
\begin{equation}\label{decay}
U(\rho)\simeq \rho^{-\frac{N-p\, s}{p-1}} \qquad \text{as $\rho\to +\infty$}
\end{equation}
and, moreover,
\begin{equation}\label{decD}
[U]_{s,q}=\|D^{s}U\|_{L^{q}(\R^{2N}, d\mu)}<+\infty\quad\forall\, q\in \left]\frac{N(p-1)}{N-s}, p\right].
\end{equation}
\end{proposition}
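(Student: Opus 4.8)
The plan is to split the statement into its five constituent assertions and treat each by now-standard but technically delicate tools for the fractional $p$-Laplacian.

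\emph{Existence and sign.} I would normalize a minimizing sequence for \eqref{S} by $\|u_n\|_{r,\alpha}=1$, so that $[u_n]_{s,p}\to{\cal S}^{1/p}$ stays bounded in $W^{s,p}_0(\R^N)$. The two functionals in \eqref{S} are invariant under the rescaling $u\mapsto\lambda^{(N-ps)/p}u(\lambda\,\cdot\,)$, and additionally under translations when $\alpha=0$, so I would run Lions' concentration-compactness on the measures $|D^su_n|^p\,d\mu$ and $|u_n|^r|x|^{-\alpha}\,dx$: the normalization excludes vanishing, while the strict subadditivity of ${\cal S}$ (a consequence of the scaling law) excludes dichotomy, producing a minimizer $U$ after rescaling and, for $\alpha=0$, translating. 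Replacing $U$ by $|U|$ does not increase $[\,\cdot\,]_{s,p}$ (the map $t\mapsto|t|$ is $1$-Lipschitz) and leaves $\|\cdot\|_{r,\alpha}$ unchanged, so we may assume $U\ge 0$; then, up to a constant multiple, $U$ solves the Euler--Lagrange equation $(-\Delta_p)^sU=U^{r-1}|x|^{-\alpha}$ weakly on $\R^N$, and the strong minimum principle for $(-\Delta_p)^s$ forces $U>0$ everywhere.

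\emph{Boundedness, continuity, symmetry.} For the critical equation above, a Brezis--Kato/Moser iteration — testing with truncated powers of $U$ to absorb the critical nonlinearity on small balls — gives $U\in L^\infty(\R^N)$, after which local H\"older continuity follows from the De Giorgi--Nash--Moser theory for the fractional $p$-Laplacian. For symmetry I would invoke that the Gagliardo seminorm does not increase under Schwarz symmetrization (a P\'olya--Szeg\H{o} inequality for $[\,\cdot\,]_{s,p}$, obtainable from the Riesz rearrangement inequality applied to the double integral), whereas $\int|u|^r|x|^{-\alpha}\,dx$ does not decrease, since $|x|^{-\alpha}$ is its own symmetric decreasing rearrangement and Hardy--Littlewood applies; hence $U^*$ is again a minimizer, radial and radially non-increasing. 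When $\alpha>0$ the weight is strictly decreasing and the equality cases in these rearrangement inequalities pin $U$ itself to be radial; when $\alpha=0$ the same analysis, or a moving-plane argument on the Euler--Lagrange equation, gives radial symmetry only up to a translation.

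\emph{Decay.} This is the delicate point. Set $\beta=(N-ps)/(p-1)$. I would establish \eqref{decay} by a barrier/comparison scheme: a careful computation of $(-\Delta_p)^s$ on radial power functions shows that, away from the origin and for small $\eps>0$, $|x|^{-\beta+\eps}$ is a supersolution and $|x|^{-\beta-\eps}$ a subsolution of a differential inequality dominating the Euler--Lagrange equation (the exponents being matched through \eqref{scal}), so the weak comparison principle yields $\rho^{-\beta-\eps}\lesssim U(\rho)\lesssim\rho^{-\beta+\eps}$. An iteration of the equation then sharpens this: once a crude polynomial decay of $U$ is known, the right-hand side $U^{r-1}|x|^{-\alpha}$ decays faster than $|x|^{-N}$ by \eqref{scal}, so its contribution is lower order and $U$ inherits the rate $\rho^{-\beta}$ of the fundamental-solution-type radial profile, the Harnack inequality being used to pass from integral to pointwise control; the matching lower bound $U(\rho)\gtrsim\rho^{-\beta}$ comes from comparison against the explicit subsolution. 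I expect the construction of these barriers and the sign analysis of $(-\Delta_p)^s$ on power functions to be the main obstacle, as it is precisely where the nonlocal, nonlinear structure of the operator resists the local heuristics.

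\emph{Higher integrability of the $s$-gradient.} Finally, \eqref{decD} follows by splitting $[U]_{s,q}^q=\iint|D^sU|^q\,d\mu$ into the near-diagonal region, a neighbourhood of the origin, and the region at infinity. On the first two, the boundedness and H\"older continuity of $U$ make the integral converge for every $q\le p$. On the last, inserting \eqref{decay} together with the scaled interior regularity estimates valid at unit scale comparable to $|x|$ (so $|U(x)-U(y)|\lesssim\min(|x-y|,|x|)\,|x|^{-\beta-1}$) reduces the question to the convergence of $\int_{|x|>1}|x|^{-q(\beta+s)}\,dx$, which holds exactly when $q(\beta+s)>N$, i.e. $q>N(p-1)/(N-s)$ since $\beta+s=(N-s)/(p-1)$. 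This fixes the stated range of exponents.
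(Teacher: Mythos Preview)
The paper does not prove this proposition at all: it is quoted verbatim from \cite{MM}, Theorem~1.1, and used as a black box throughout. There is therefore nothing to compare your argument against in the present paper. What you have written is a plausible outline of how the result is obtained in \cite{MM}, and the overall architecture (concentration--compactness for existence, Moser iteration plus De Giorgi--Nash--Moser for boundedness and continuity, rearrangement for symmetry, barrier comparison for decay) is indeed the one followed there.

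One point in your sketch deserves a correction. For \eqref{decD} you invoke the pointwise bound $|U(x)-U(y)|\lesssim\min(|x-y|,|x|)\,|x|^{-\beta-1}$, which amounts to a Lipschitz/$C^{1}$ estimate at scale $|x|$. Such regularity is \emph{not} available for solutions of nonlinear nonlocal equations driven by $(-\Delta_{p})^{s}$; only H\"older continuity is known. What is actually needed (and what \cite{MM} proves, cf.\ Lemma~\ref{Beso} here) is differentiability of order $\sigma$ for some $\sigma>s$, at the Besov scale $B^{\sigma}_{p,\infty}$. Replacing your linear increment bound by the H\"older-type estimate $|U(x)-U(y)|\lesssim |x-y|^{\gamma}\,|x|^{-\beta-\gamma}$ with some $\gamma\in\,]s,1[$ makes the near-diagonal integral converge (since $\gamma>s$) and still reduces the far-field question to $\int_{|x|>1}|x|^{-q(\beta+s)}\,dx<\infty$, yielding the same threshold $q>N(p-1)/(N-s)$. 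So your strategy is salvageable, but as written it overstates the regularity input.
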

Since problem \eqref{S} is homogeneous in $u$, the set of its positive minimizers turns out to be a cone. The associated Euler-Lagrange equation reads
\[
(-\Delta_{p})^{s} u=\lambda\, |x|^{-\alpha}\, u^{r-1}
\]
with arbitrary $\lambda>0$ when $\alpha<ps$, which we assume.  It is convenient to normalize minimizers  $U$ requiring that 
$\lambda=1$, namely
\begin{equation}\label{EL}
(-\Delta_{p})^{s}U=|x|^{-\alpha}\, U^{r-1}.
\end{equation}
This implies, after testing with $U$,
\begin{equation}\label{norm}
[U]_{s,p}^{p}=|U|_{r,\alpha}^{r}={\cal S}^{\frac{N-\alpha}{p\, s-\alpha}}.
\end{equation}
Finally, observe that for any $\eps>0$ the function
\begin{equation}\label{ueps}
U_{\eps}(x)=\eps^{{\frac{p\, s-N}{p}}}U\left(\frac{x}{\eps}\right)
\end{equation}
is still a minimizer fulfilling \eqref{EL}--\eqref{norm}. Due to \eqref{decay} the family $\{U_{\eps}\}_{\eps}$ concentrates at zero and \eqref{norm} entails
\[
|x|^{-\alpha}\, U^{r}_\eps\ \weakstarto \ {\cal S}^{\frac{N-\alpha}{p\, s-\alpha}}\, \delta_{0}.
\]
Our main result, chiefly based on \cite{MM}, reads as follows.
\begin{theorem} \label{maintheo}
Let $p>1$, $s\in \ ]0, 1[$, $0\le \alpha<ps<N$, and $r$ satisfy \eqref{scal}. Given any positive minimizer $U$ for \eqref{S} fulfilling \eqref{norm}, let $U_{\eps}$ be defined by \eqref{ueps}. Then there exists a family of truncations $\{U_{\eps, \delta}\}_\eps$ of $U_{\eps}$ in $B_{\delta}$ such that,  for every $\eps\le\delta$,
\begin{align}
\label{st1}
q\in \ \left]\frac{N(p-1)}{N-s}, p\right]\quad &\implies\quad [U_{\eps, \delta}]_{s, q}\le  C\, \eps^{\frac{N}{q}-\frac{N}{p}};\\
\label{nu}
q\in \ \left]1, \frac{N(p-1)}{N-s}\right]\quad&\implies\quad\forall\,\nu>0\ \exists\, C_{\nu}:\; [U_{\eps, \delta}]_{s, q}\le  C_{\nu}\, \delta^{\frac{N}{q}-\frac{N}{p}}\, \left(\frac{\eps}{\delta}\right)^{\frac{N-p\, s}{p\, (p-1)}-\nu}.
\end{align}
The constants $C$ and $C_\nu$ are independent of $\eps$ and $\delta$, but may depend on $U$.
\end{theorem}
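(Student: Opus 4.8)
The plan is to fix once and for all a radial $\varphi\in C^\infty_c(B_2)$ with $\varphi\equiv1$ on $B_1$ and $0\le\varphi\le1$, and to set $U_{\eps,\delta}(x):=\varphi(x/\delta)\,U_\eps(x)$; this obviously satisfies \eqref{ctrunc} and depends neither on $q$ nor on $\nu$. The first move I would make is a scaling reduction: writing $\xi=x/\eps$ and $R:=\delta/\eps\ge1$ (this is where $\eps\le\delta$ is used) one has $U_{\eps,\delta}(x)=\eps^{(ps-N)/p}\,V_R(\xi)$ with $V_R:=\varphi(\cdot/R)\,U$, and the homogeneity of the Gagliardo seminorm under dilations gives
\[
[U_{\eps,\delta}]_{s,q}^q=\eps^{\,N-\frac{qN}{p}}\,[V_R]_{s,q}^q\qquad\text{for every }q .
\]
So it suffices to prove $[V_R]_{s,q}\lesssim1$ on the first range and $[V_R]_{s,q}^q\lesssim_\nu R^{\,N-q\frac{N-s}{p-1}+q\nu}$ on the second; the stated inequalities then follow upon inserting $R=\delta/\eps$ and using the elementary identities $\beta+s=\frac{N-s}{p-1}$ and $\frac{N-s}{p-1}-\frac{N}{p}=\frac{N-ps}{p(p-1)}$, where $\beta:=\frac{N-ps}{p-1}$ is the decay exponent in \eqref{decay}.

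For $q\in\big(\frac{N(p-1)}{N-s},p\big]$ I would write $[V_R]_{s,q}\le[U]_{s,q}+[(\varphi(\cdot/R)-1)\,U]_{s,q}$, the first term being finite by \eqref{decD}. For the second I would use the elementary product estimate, splitting it into a piece controlled by the tail $\iint_{x\notin B_R}|U(x)-U(y)|^q|x-y|^{-N-qs}\,dx\,dy$ of the convergent integral $[U]_{s,q}^q$ (hence $o(1)$ as $R\to\infty$) and a commutator piece $\iint|U(y)|^q|\varphi_R(x)-\varphi_R(y)|^q|x-y|^{-N-qs}\,dx\,dy$, which is supported where at least one of $x,y$ leaves $B_R$ and where $|\varphi_R(x)-\varphi_R(y)|\lesssim\min(1,|x-y|/R)$; splitting this according to whether $|x-y|\le R$ or $|x-y|>R$ and using the decay \eqref{decay} of $U$ one checks it is $\lesssim R^{\,N-q\frac{N-s}{p-1}}+R^{-qs}$, which stays bounded on this range (the first exponent being negative precisely when $q>\frac{N(p-1)}{N-s}$). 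Thus $[V_R]_{s,q}\le C$ uniformly in $R\ge1$, and \eqref{st1} follows.

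The bulk of the work is the range $q\in\big(1,\frac{N(p-1)}{N-s}\big]$, where $[U]_{s,q}=+\infty$ with the divergence coming solely from the behaviour at infinity: on any fixed ball $U$ is, by \eqref{decay} and the interior regularity of minimizers from \cite{MM}, regular enough that $\iint_{B_\rho\times B_\rho}|U(x)-U(y)|^q|x-y|^{-N-qs}\,dx\,dy<+\infty$ for every $\rho>0$ and every $q\in(1,p]$ — indeed finiteness of this for $q$ in the previous range, guaranteed by \eqref{decD}, already forces $U$ to be H\"older continuous at the origin with an exponent larger than $s$, which then covers every $q$. I would then decompose $\R^{2N}$ into the ``core'' $\{|x|,|y|\lesssim1\}$ (contribution $O(1)$ by the above), the diagonal dyadic pieces $\{|x|\sim|y|\sim2^j,\ |x-y|\lesssim2^j\}$ with $1\lesssim2^j\lesssim R$, the cross-scale pieces $\{|x|\sim2^j\ll2^k\sim|y|\}$, and the pieces meeting the truncation annulus $B_{2R}\setminus B_R$ or the exterior $\{|y|>2R\}$. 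Rescaling a diagonal piece by $2^j$ and using \eqref{decay} together with the scale-invariant bound that $2^{j\beta}U(2^j\xi)$ is bounded with its gradient for $|\xi|\sim1$ uniformly in $j$, one gets $|U(x)-U(y)|\lesssim2^{-j(\beta+1)}|x-y|$ there, hence a contribution $\lesssim2^{j(N-qs)}2^{-j\beta q}=2^{j(N-\beta q-qs)}$; the cross-scale and truncation pieces are handled similarly and are of the same or smaller order. Summing over $0\le j\lesssim\log_2 R$ and using $\beta+s=\frac{N-s}{p-1}$ gives $[V_R]_{s,q}^q\lesssim1+\sum_{0\le j\lesssim\log_2 R}2^{\,j(N-q\frac{N-s}{p-1})}\lesssim_\nu R^{\,N-q\frac{N-s}{p-1}+q\nu}$ for every $\nu>0$, the factor $R^{q\nu}$ being needed only to absorb the logarithm that occurs exactly at the endpoint $q=\frac{N(p-1)}{N-s}$ (where the exponent vanishes) and superfluous strictly inside the range. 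Inserting this into the reduction formula with $R=\delta/\eps$ yields \eqref{nu}.

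I expect the main obstacle to be precisely this last estimate: one must first upgrade the pointwise decay \eqref{decay} from \cite{MM} to its scale-invariant gradient form, and then carry out the dyadic bookkeeping carefully enough that the truncation annulus and the region beyond $B_{2R}$ — where $V_R$ is no longer a rescaled copy of $U$ — do not spoil the sharp power $R^{\,N-q(N-s)/(p-1)}$; the endpoint logarithm is the reason the statement must carry the arbitrarily small loss $\nu$.
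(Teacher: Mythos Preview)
Your route differs substantially from the paper's. The paper truncates by \emph{composition}, $U_{\eps,\delta}=G_{\eps,\delta}\circ U_\eps$ with a piecewise-affine $G_{\eps,\delta}$ of Lipschitz constant $\le 2$, so that $|{\sf d}_h U_{\eps,\delta}|\le 2\,|{\sf d}_h U_\eps|$ pointwise and every first-order seminorm transfers directly from $U_\eps$ to $U_{\eps,\delta}$. For the hard range it invokes the \emph{integral} Besov regularity $[U]_{B^\sigma_{p,\infty}}<\infty$ for some $\sigma>s$ (established in \cite{MM}), passes from summability $p$ to an auxiliary $t$ in the easy range via the compact support, and reaches $W^{s,q}$ through an interpolation inequality between $B^\sigma_{t,\infty}$ and $W^{s,t}$ taken from \cite{MM}; the $\nu$-loss comes from this interpolation step, not from an endpoint logarithm.

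Your dyadic scheme instead rests on a \emph{pointwise} oscillation bound $|U(x)-U(y)|\lesssim 2^{-j(\beta+1)}|x-y|$ on annuli $|x|\sim|y|\sim 2^j$, i.e.\ a scale-invariant Lipschitz estimate for $U$ at infinity. You flag this as the main obstacle but do not resolve it, and your proposed justification for the core piece --- that finiteness of $[U]_{s,q}$ on the first range ``forces $U$ to be H\"older continuous with an exponent larger than $s$'' --- is incorrect: Sobolev embedding from $W^{s,q}$ yields at best $C^{0,\,s-N/q}$, always strictly below exponent $s$. A H\"older exponent $\gamma>s$ with the correct scaling would indeed suffice for your sums, but for solutions of $(-\Delta_p)^s$ this is a nontrivial regularity statement, strictly stronger than the Besov bound that \cite{MM} actually supplies; for $1<p<2$ it may not be available in the literature at all. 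Without that input both the core finiteness for small $q$ and the annular summation are unjustified, so the second-range argument has a genuine gap. The paper's method is designed precisely to bypass the need for such pointwise control.
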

\noindent
Let us make a few comments on this result.

\begin{description}
\item[Difficulties] As discussed before, a delicate issue peculiar to the fractional setting is that, no matter how smoothly the truncation is implemented, there is no direct way to bound $[U_{\eps, \delta}]_{s, q}$ in terms of $[U_{\eps, \delta}]_{s, p}$. More importantly, even proving that $[U_{\eps, \delta}]_{s, q}$ is finite turns out to be somewhat non-trivial. Indeed, if $q\le p$ then
\[
[U_{\eps}]_{s, q}<+\infty \quad \Leftrightarrow\quad q>\frac{N(p-1)}{N-s};
\]
cf. the introduction of \cite{MM}. Consequently, for $q\le\frac{N(p-1)}{N-s}$, any bound of $[U_{\eps, \delta}]_{s, q}$ in terms of $[U_{\eps}]_{s, q}$ is useless, as the latter is infinite. 
\item[Comparison with the local case] The truncation proposed here can be also performed in the classical framework, i.e., $s=1$. In this case, the minimizers of \eqref{S} are given by \eqref{AT} and an explicit calculation shows 
\begin{equation}\label{fest}
\|\nabla U_{\eps, \delta}\|_{L^{q}(\R^{N})}\le 
\begin{cases}
C\, \eps^{\frac{N}{q}-\frac{N}{p}}&\text{if}\ q\in \ \left]\frac{N(p-1)}{N-1}, p\right],\\[10pt]
C\, \delta^{\frac{N}{q}-\frac{N}{p}}\big(\eps/\delta\big)^{\frac{N-p}{p(p-1)}}&\text{if}\ q\in \ \left]1,\frac{N(p-1)}{N-1}\right[\ ;
\end{cases} 
\end{equation}
see \cite{DH} for similar estimates of truncations via cut-off. Hence, there is a full agreement in the first case and `almost the same' estimate in the other, with the nonlocal bound being slightly worse (but by an arbitrarily small difference from the asymptotic point of view) than the local one. 
\item[Applications] Quantitative truncation estimates reveal particularly useful when critical problems of Br\'ezis-Nirenberg's type are studied. Those involving lower order norms of the gradient naturally arise once the leading term in the equation is of $p$-$q$ Laplacian type, and estimates like \eqref{fest} have had a key r\^ole; see, e.g., \cite{YY, LZ, CMP}. \\
A similar theory has been attempted in recent years for the fractional setting, often based on the assumption that minimizers $U$ of \eqref{S} have a finite $[U]_{s,q}$ semi-norm when $q\le\frac{N(p-1)}{N-s}$. This hypothesis would indeed give estimates fully analogous to the classical case, namely \eqref{nu} with $\nu=0$, but, as already pointed out, it is {\em false}. Nevertheless, we hope that the weaker version \eqref{nu} still suffices to justify most of the results in the literature.
\end{description}

\vskip10pt
\noindent
{\em Notations}: $|A|$ will denote the Lebesgue measure of $A\subseteq \R^{N}$. If $p\ge 1$ and $u:\R^N\to\R$ is measurable then $\|u\|_{L^{p}}=\|u\|_{L^{p}(\R^{N}, dx)}$, provided no confusion can arise. The symbol $C$ will denote a (finite) positive constant, which may change in value from line to line, and whose dependencies are specified when necessary.
\section{Description of truncation and proof of Theorem \ref{maintheo}}
Let $U$ be a normalized minimizer (i.e., obeying \eqref{EL}) and let $U_\eps$ be given by \eqref{ueps}. We will describe a basic truncation technique for
$\{U_{\eps}\}_{\eps}$ first introduced in \cite{MPSY}. The polynomial decay \eqref{decay} reads as 
\[
c_{1} \, \rho^{-\frac{N-p\, s}{p-1}}\le U(\rho)\le  c_{2}\,  \rho^{-\frac{N-p\, s}{p-1}},\quad \rho\ge 1,
\]
where $c_{1}$ and $c_{2}$ depend on $U$. For every $\theta>1$ one infers
\[
\frac{U(\theta \rho)}{U(\rho)}\le \frac{c_{2}}{c_{1}}\, \theta^{\frac{p\, s-N}{p-1}}
\]
so that there exists $\bar\theta$ large such that
\begin{equation}\label{theta}
\frac{U(\bar\theta \rho)}{U(\rho)}\le\frac{1}{2}\, ,\quad\rho\ge 1.
\end{equation}
Set, provided $\eps,\delta>0$,
\[
m_{\eps,\delta}=\frac{U_\eps(\delta)}{U_\eps(\delta) - U_\eps(\bar\theta \delta)}
\]
as well as
\[
G_{\eps,\delta}(t)  = 
\begin{cases}
0 &\text{if }\   0 \le t \le U_\eps(\bar \theta\, \delta),\\[5pt]
m_{\eps,\delta}\, (t - U_\eps(\bar\theta\, \delta)) &\text{if }\   U_\eps(\bar\theta\, \delta) \le t \le U_\eps(\delta),\\[5pt]
t &\text{if }\ t \ge U_\eps(\delta).
\end{cases}
\]
Evidently, the function $G_{\eps, \delta}:\R_{+}\to \R_{+}$ is non-decreasing and absolutely continuous. We define the {\em truncation by composition} of the family $\{U_{\eps}\}_{\eps}$ in $B_{\bar\theta\delta}$  as
\[
U_{\eps,\delta}(\rho) = G_{\eps,\delta}(U_\eps(\rho)),
\]
which is a radially non-increasing function such that
\[
U_{\eps,\delta}(\rho)=
\begin{cases}
U_\eps(\rho) &\text{if }\ \rho\le\delta,\\[5pt]
0 &\text{if }\ \rho\ge\bar\theta\, \delta.
\end{cases}
\]
The following truncation estimates hold true. More general situations are treated in \cite[Lemmas 2.10-2.11]{CSM}.
\begin{lemma}[\cite{Y}, Lemma 2.7] \label{Lemma 3}
There exists a constant $C=C(U, N, p, s)>0$ such that for every $\eps\leq \delta/2$ it holds
\[
[U_{\eps,\delta}]_{s,p}^p \le {\cal S}^{\frac{N-\alpha}{p\, s-\alpha}} + C\, \left(\frac{\eps}{\delta}\right)^{\frac{N-p\,s}{p-1}}
\qquad\text{and}\qquad
\|U_{\eps,\delta}\|_{r,\alpha}^{r}\ge {\cal S}^{\frac{N-\alpha}{p\, s-\alpha}} -  C\, \left(\frac{\eps}{\delta}\right)^{\frac{N-\alpha}{p-1}}.
\]
\end{lemma}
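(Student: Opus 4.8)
The plan is to estimate the two quantities separately, exploiting the precise structure of the truncation by composition $U_{\eps,\delta}=G_{\eps,\delta}\circ U_\eps$ together with the scaling \eqref{ueps} and the decay \eqref{decay}. By homogeneity of $G$ under the rescaling $U_\eps(x)=\eps^{(ps-N)/p}U(x/\eps)$, one checks that $U_{\eps,\delta}(x)=\eps^{(ps-N)/p}\,V_{\delta/\eps}(x/\eps)$ where $V_\lambda=G_\lambda\circ U$ is the truncation of the \emph{fixed} profile $U$ at radius $\lambda=\delta/\eps\ge 2$. Since $[u(\cdot/\eps)]_{s,p}^p=\eps^{N-ps}[u]_{s,p}^p$, the prefactor $\eps^{ps-N}$ cancels and both $[U_{\eps,\delta}]_{s,p}^p$ and $\|U_{\eps,\delta}\|_{r,\alpha}^r$ reduce exactly to the corresponding quantities for $V_\lambda$; so it suffices to prove, for the fixed minimizer $U$, that $[V_\lambda]_{s,p}^p\le\mathcal S^{(N-\alpha)/(ps-\alpha)}+C\lambda^{-(N-ps)/(p-1)}$ and $\|V_\lambda\|_{r,\alpha}^r\ge\mathcal S^{(N-\alpha)/(ps-\alpha)}-C\lambda^{-(N-\alpha)/(p-1)}$ as $\lambda\to+\infty$, which is the content of \cite[Lemma 2.7]{Y}.

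For the lower bound on $\|V_\lambda\|_{r,\alpha}^r$, I would note that $0\le V_\lambda\le U$ with $V_\lambda=U$ on $B_\lambda$ and $V_\lambda=0$ outside $B_{\bar\theta\lambda}$, so
\[
\|U\|_{r,\alpha}^r-\|V_\lambda\|_{r,\alpha}^r=\int_{|x|\ge\lambda}\frac{U^r-V_\lambda^r}{|x|^\alpha}\,dx\le\int_{|x|\ge\lambda}\frac{U^r}{|x|^\alpha}\,dx.
\]
Using \eqref{decay}, $U(\rho)^r\le C\rho^{-r(N-ps)/(p-1)}$, and passing to polar coordinates, the tail integral is bounded by $C\int_\lambda^\infty\rho^{r(ps-N)/(p-1)-\alpha+N-1}\,d\rho$; the exponent, by the scaling relation \eqref{scal} (which gives $r(N-ps)/(p-1)=rs\cdot\frac{N-ps}{s(p-1)}$ and, combined with $r=p(N-\alpha)/(N-ps)$, yields tail exponent $-(N-\alpha)/(p-1)$ after the $\rho^{N-1}$ weight), produces exactly $C\lambda^{-(N-\alpha)/(p-1)}$. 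Since $\|U\|_{r,\alpha}^r=\mathcal S^{(N-\alpha)/(ps-\alpha)}$ by \eqref{norm}, this is the desired inequality.

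For the upper bound on $[V_\lambda]_{s,p}^p$ one must split the Gagliardo double integral over the regions where both points, one point, or neither point lies in $B_\lambda$ (or rather the three zones $B_\lambda$, the annulus $B_{\bar\theta\lambda}\setminus B_\lambda$, and the exterior). On $B_\lambda\times B_\lambda$ one has $V_\lambda=U$, contributing at most $[U]_{s,p}^p=\mathcal S^{(N-\alpha)/(ps-\alpha)}$. The exterior $\times$ exterior region contributes $0$. In the remaining mixed regions one uses the elementary inequality $|V_\lambda(x)-V_\lambda(y)|\le|U(x)-U(y)|$ (valid because $G_{\eps,\delta}$ is $1$-Lipschitz once rescaled — indeed $m_{\eps,\delta}\le 1$ thanks to the choice \eqref{theta} of $\bar\theta$, so $0\le G_\lambda'\le 1$), reducing everything to tail integrals of $|D^sU|^p$ over $\R^{2N}\setminus(B_\lambda\times B_\lambda)$. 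These are controlled by $\int_{|x|\ge\lambda}U^{p-1}(-\Delta_p)^sU\,dx$-type expressions or, more directly, by combining the pointwise decay \eqref{decay} with a careful splitting of $d\mu$: the worst term, where $x$ is near the origin and $y$ in the tail, gives $\int_{|y|\ge\lambda}U(y)^{p-1}\big(\int_{|x|\le 1}|x-y|^{-N-ps}U(x)\,dx\big)dy\simeq\int_{|y|\ge\lambda}U(y)^{p-1}|y|^{-N-ps}\,dy$, whose exponent after the volume element is again $\rho^{(ps-N)(p-1)/(p-1)-N-ps+N-1}=\rho^{-N-1}$... more carefully one gets the stated rate $\lambda^{-(N-ps)/(p-1)}$, which is strictly slower than the $\|V_\lambda\|_{r,\alpha}$-error since $(N-ps)/(p-1)<(N-\alpha)/(p-1)$.

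The main obstacle is precisely the bookkeeping in this last step: one has to verify that \emph{no} piece of the Gagliardo energy outside $B_\lambda\times B_\lambda$ decays slower than $\lambda^{-(N-ps)/(p-1)}$. The delicate regions are those pairing a point in the far tail with a point in the bounded core, where $U$ is of order $1$ and $|x-y|^{-N-ps}$ must absorb the whole singular kernel; here one needs the sharp decay \eqref{decay} (not merely $U\in L^p$) and the fact that $p\cdot\frac{N-ps}{p-1}>N$ never fails in the relevant range — equivalently $q=p$ sits strictly above the threshold $\frac{N(p-1)}{N-s}$, which is exactly why $[U]_{s,p}<\infty$ in the first place, consistent with \eqref{decD}. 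Once the worst annulus/core and tail/core terms are seen to produce the exponent $(N-ps)/(p-1)$, the remaining mixed terms are of the same or higher order and the estimate closes. I would organize the write-up as: (i) reduce to the fixed profile by scaling; (ii) do the easy $\|\cdot\|_{r,\alpha}$ tail; (iii) split $[V_\lambda]_{s,p}^p$ into the five regions and use $\mathrm{Lip}(G_\lambda)\le 1$; (iv) estimate each tail integral via \eqref{decay} and polar coordinates, tracking exponents through \eqref{scal}.
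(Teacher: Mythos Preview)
The paper does not prove this lemma; it simply quotes it from \cite{Y} (and points to \cite{CSM} for a generalization). So there is no ``paper's own proof'' to compare with, and your outline should be judged against the actual arguments in \cite{MPSY,Y,CSM}. Your reduction by scaling to the fixed profile $V_\lambda$, $\lambda=\delta/\eps$, and your tail computation for $\|V_\lambda\|_{r,\alpha}^r$ are correct and are exactly how those references proceed.

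For the Gagliardo seminorm, however, there is a concrete error and a genuine gap.

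\textbf{The error.} You claim $m_{\eps,\delta}\le 1$, hence $\mathrm{Lip}(G_\lambda)\le 1$. This is false: since $U_\eps(\bar\theta\delta)>0$ one always has
\[
m_{\eps,\delta}=\frac{U_\eps(\delta)}{U_\eps(\delta)-U_\eps(\bar\theta\delta)}>1,
\]
and \eqref{theta} only yields $m_{\eps,\delta}\le 2$. If $\mathrm{Lip}(G_\lambda)$ were $\le 1$ you would get $[V_\lambda]_{s,p}\le [U]_{s,p}$ with \emph{no} error term at all, which is too good.

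\textbf{The gap.} Using the correct bound $\mathrm{Lip}(G_\lambda)\le 2$, your scheme gives
\[
[V_\lambda]_{s,p}^p\le [U]_{s,p}^p+(2^p-1)\,T_\lambda,\qquad T_\lambda:=\iint_{\R^{2N}\setminus(B_\lambda\times B_\lambda)}|D^sU|^p\,d\mu.
\]
But $T_\lambda$ is \emph{not} of order $\lambda^{-(N-ps)/(p-1)}$ in general: the core--tail interaction ($|x|\le 1$, $|y|\ge\lambda$), where $|U(x)-U(y)|\simeq U(x)\asymp 1$ and $|x-y|\asymp |y|$, contributes
\[
\int_{B_1}\int_{B_\lambda^c}\frac{|U(x)-U(y)|^p}{|x-y|^{N+ps}}\,dy\,dx\ \simeq\ \lambda^{-ps},
\]
and $\lambda^{-ps}$ decays \emph{slower} than $\lambda^{-(N-ps)/(p-1)}$ precisely when $p^2s<N$, a regime that is certainly allowed. (Your aside that ``$p\cdot\frac{N-ps}{p-1}>N$ never fails'' is also incorrect: this is $p^2s<N$, not $ps<N$; the latter is what is equivalent to $p>\frac{N(p-1)}{N-s}$.) The heuristic ``worst term'' you wrote, with a factor $U(y)^{p-1}$, is not the Gagliardo integrand and does not arise from the crude Lipschitz bound.

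The proofs in \cite{MPSY,Y,CSM} avoid this by estimating the \emph{difference} $|V_\lambda(x)-V_\lambda(y)|^p-|U(x)-U(y)|^p$ on the off-diagonal regions, exploiting that $U-V_\lambda$ is supported in $B_\lambda^c$ and bounded by $U(\lambda)\le C\lambda^{-(N-ps)/(p-1)}$. After a mean-value inequality on $t\mapsto t^p$, the dangerous core--tail term picks up an extra factor of size $U(\lambda)$, which restores the claimed rate. Your step (iii)--(iv) should be reorganized along these lines rather than through the bare Lipschitz reduction.
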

To prove Theorem \ref{maintheo}, some higher differentiability properties at the Besov scale for $U$, essentially contained in \cite{MM}, will be exploited. Let $0<\sigma<2$. The homogeneous Besov semi-norm of a measurable function $v:\R^N\to\R$ is
\[
[v]_{B^{\sigma}_{p, \infty}}:=\sup_{|h|>0}\|h^{-\sigma}{\sf d}^{2}_{h}v\|_{L^{p}},\quad\text{where}\quad
{\sf d}^{2}_{h}v(x)=2\, v(x+h)-v(x)-v(x+2h).
\]
When $\sigma<1$, it is equivalent to the one involving first-order differences, namely
\[
[v]_{{\cal B}^{\sigma}_{p, \infty}}=\sup_{|h|>0}\||h|^{-\sigma}{\sf d}_{h}v\|_{L^{p}},\quad\text{with}\quad
 {\sf d}_{h}v(x)=v(x)-v(x+h).
\]
Indeed, chiefly using $2\,{\sf d}_{h}={\sf d}_{2h}-{\sf d}^{2}_{h}$, one has
\begin{equation}\label{Bequiv}
\frac{1}{2}\, [v]_{B^{\sigma}_{p, \infty}}\le [v]_{{\cal B}^{\sigma}_{p, \infty}}\le \frac{1}{2-2^{\sigma}}\, [v]_{B^{\sigma}_{p, \infty}}.
\end{equation}
\begin{lemma}\label{Beso}
Under the assumptions of Proposition \ref{propmin}, let $U$ be a minimizer for \eqref{S}. Then there exists $\bar\sigma>s$ (depending on $N, p, s, r,\alpha$) such that 
\begin{equation}\label{best}
[U]_{B^{\sigma}_{p, \infty}}<+\infty\quad\forall\, \sigma\in [s,\bar\sigma].
\end{equation}
\end{lemma}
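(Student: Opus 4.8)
The plan is to boost the $s$-differentiability of $U$ that is already encoded in the Euler--Lagrange equation \eqref{EL} to a strictly better Besov differentiability \emph{at unit scale}, and then to propagate this to all scales by rescaling, using the polynomial decay \eqref{decay} to keep all constants under control. Throughout write $\gamma=(N-ps)/(p-1)$, so that \eqref{decay} reads $U(\rho)\simeq\rho^{-\gamma}$.

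\emph{First, I would establish a local gain.} Since $U\in L^\infty\cap W^{s,p}_0(\R^N)$ and, by \eqref{EL}, $(-\Delta_p)^sU=|x|^{-\alpha}U^{r-1}$ with right-hand side bounded on $B_4$ when $\alpha=0$ and lying in $L^t(B_4)$ for every $t<N/\alpha$ (note $N/\alpha>1$) when $0<\alpha<ps<N$, the self-improving interior regularity for the fractional $p$-Laplacian — which is precisely the ingredient of \cite{MM} to be borrowed — furnishes an exponent $\bar\sigma=\bar\sigma(N,p,s,r,\alpha)$ with $s<\bar\sigma<\min\{2,\,ps/(p-1)\}$ and a bound $[U]_{B^{\bar\sigma}_{p,\infty}(B_1)}\le C$; if $\bar\sigma<1$ one may equivalently work with first differences by \eqref{Bequiv}.

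\emph{Then I would globalise by rescaling.} For $j\ge0$ set $V_j(y)=2^{j\gamma}U(2^jy)$. By \eqref{decay} the family $\{V_j\}_j$ is bounded in $L^\infty(B_4\setminus B_{1/4})$ and, splitting $\{|z|\le 2^{-j}\}$, $\{2^{-j}\le|z|\le1\}$, $\{|z|\ge1\}$ and using $\gamma(p-1)=N-ps$, has uniformly bounded nonlocal tails $\int_{\R^N}V_j(z)^{p-1}(1+|z|)^{-N-ps}\,dz$. Using the $(p-1)$-homogeneity of $(-\Delta_p)^s$, the scaling $(-\Delta_p)^s[u(\lambda\cdot)]=\lambda^{ps}[(-\Delta_p)^su](\lambda\cdot)$, and \eqref{scal}, a direct computation shows that $V_j$ solves
\[
(-\Delta_p)^sV_j=2^{-j\frac{ps-\alpha}{p-1}}\,|y|^{-\alpha}\,V_j^{\,r-1}\qquad\text{on }\R^N,
\]
whose right-hand side is uniformly bounded on $B_4\setminus B_{1/4}$ (the weight being harmless away from the origin). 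Hence the local gain applies to each $V_j$, giving $[V_j]_{B^{\bar\sigma}_{p,\infty}(B_3\setminus B_{1/3})}\le C$ with $C$ independent of $j$, and undoing the scaling yields, for every $\sigma\in[s,\bar\sigma]$ and all $h$,
\[
\|{\sf d}^2_hU\|_{L^p(B_{2^{j+1}}\setminus B_{2^j})}^p\le C\,2^{j(N-\gamma p-\sigma p)}\,|h|^{\sigma p}
\]
— for $|h|\lesssim2^j$ from the rescaled local estimate, and for $|h|\gtrsim2^j$ from \eqref{decay} alone, the left side being then $\le C\,2^{j(N-\gamma p)}\le C\,2^{j(N-\gamma p-\sigma p)}|h|^{\sigma p}$. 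On the fixed ball $B_1$ one uses the local gain for $|h|\le1$ and the trivial bound $\|{\sf d}^2_hU\|_{L^p(B_1)}\le C$ for $|h|>1$. Summing over $j\ge0$, the geometric series converges because $N-\gamma p-\sigma p<0$, i.e. $\sigma>\frac{N}{p}-\gamma=\frac{p^2s-N}{p(p-1)}$, and this holds for all $\sigma\in[s,\bar\sigma]$ thanks to the elementary identity $s-\tfrac{N}{p}+\gamma=\dfrac{N-ps}{p(p-1)}>0$, which gives $N/p-\gamma<s$. Therefore $\sup_{|h|>0}|h|^{-\sigma}\|{\sf d}^2_hU\|_{L^p(\R^N)}<+\infty$ for every $\sigma\in[s,\bar\sigma]$, which is \eqref{best}.

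The hard part will be the local step: one needs a genuine \emph{interior Besov} gain at the $L^p$-scale — not merely a Hölder one, which for large $p$ could be weaker than $s$ — for solutions of \eqref{EL}, and, when $\alpha>0$, one must check that the restricted summability $t<N/\alpha$ of the forcing term near the origin still leaves a strict improvement $\bar\sigma>s$, typically after an auxiliary embedding to pass from $L^t$ back to $L^p$. Everything downstream — the rescaling identity, the split into the two ranges of $|h|$, and the convergence of the dyadic sum — is routine, the only quantitative input being the strict inequality $\sigma>N/p-\gamma$, which holds with a fixed margin.
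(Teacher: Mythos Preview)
Your approach differs substantially from the paper's. The paper obtains \eqref{best} in one stroke by invoking a \emph{global} Besov regularity estimate, namely \cite[Lemma~4.3]{MM}: since $U$ solves $(-\Delta_p)^sU=f$ with $f=|x|^{-\alpha}U^{r-1}\in L^\gamma(\R^N)$ for every $\gamma\in[1,N/\alpha[$ (by \cite[Lemma~5.6]{MM}) and since $U\in L^\beta(\R^N)$ for all $\beta>\tfrac{N(p-1)}{N-ps}$ (from boundedness and \eqref{decay}), that lemma directly yields $[U]_{B^\sigma_{p,\infty}}<+\infty$ on the whole of $\R^N$ for $\sigma$ in an explicit interval above $s$. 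No localisation, no rescaling, no dyadic summation. Your route---an interior Besov gain, propagated by scaling to each dyadic shell and then summed---is the natural one when only a local regularity theory is at hand; here it is the longer path, though conceptually instructive.

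There is, moreover, a genuine gap in your handling of the range $|h|\gtrsim 2^j$. You assert that the decay \eqref{decay} alone gives
\[
\|{\sf d}^2_hU\|_{L^p(A_j)}^p\le C\,2^{j(N-\gamma p)},\qquad A_j=B_{2^{j+1}}\setminus B_{2^j}.
\]
This fails whenever $N>p^2s$ (equivalently $\gamma p>N$). Indeed, for $|h|=2^j$ the shifted annulus $A_j+h$ meets a fixed neighbourhood of the origin, where $U$ is only bounded, not of size $2^{-j\gamma}$; for $x$ near $-h$ one has $|{\sf d}^2_hU(x)|\ge 2U(1/2)-C\,2^{-j\gamma}\ge c>0$ on a set of fixed positive measure, so $\|{\sf d}^2_hU\|_{L^p(A_j)}^p\ge c$ for all large $j$, while your claimed bound $2^{j(N-\gamma p)}\to 0$. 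Hence the chain $\le C\,2^{j(N-\gamma p)}\le C\,2^{j(N-\gamma p-\sigma p)}|h|^{\sigma p}$ breaks at the first inequality. The fix is easy: for $|h|\ge 1$ do not estimate shell by shell in the range $2^j\lesssim|h|$; bound the aggregate instead,
\[
\sum_{2^j\lesssim|h|}\|{\sf d}^2_hU\|_{L^p(A_j)}^p\ \le\ C\int_{B_{C|h|}}|U|^p\ \le\
\begin{cases}
C\,|h|^{N-\gamma p}&\text{if }N>\gamma p,\\
C\,(1+\log|h|)&\text{if }N=\gamma p,\\
C&\text{if }N<\gamma p,
\end{cases}
\]
and in every case this is $\le C\,|h|^{\sigma p}$ because your own computation $N/p-\gamma<s\le\sigma$ gives $N-\gamma p<\sigma p$. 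With this correction the rest of your scheme goes through; the paper's direct global argument simply avoids the issue altogether.
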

\begin{proof}
By \cite[Lemma 5.6]{MM}, the function $U$ weakly solves $(-\Delta_{p})^{s}U=f$, with $f\in L^{\gamma}(\R^{N})$ for every
$\gamma\in\left[1, \frac{N}{\alpha}\right[$. Proposition \ref{propmin} ensures that $U\in L^{\infty}(\R^{N})$, whence $U\in L^{\beta}(\R^{N})$ for all $\beta\in \ \left]\frac{N(p-1)}{N-p\, s}, +\infty\right]$, as an explicit calculation exploiting \eqref{decay} shows.  We can thus apply the regularity estimate \cite[Lemma 4.3]{MM} to get $[U]_{B^{\sigma}_{p, \infty}}<+\infty$ once
\[
\sigma= 
\begin{cases}
\dfrac{p\, s}{p-\theta}&\text{if $p\ge 2$},\\[10pt]
\dfrac{2\, s}{2-\theta}&\text{if $1<p<2$},
\end{cases}
\quad \text{with $\theta\in \ ]0, 1]$ such that}\quad 
\begin{cases}
\dfrac{\theta}{p}+\dfrac{1-\theta}{\beta}=\dfrac{1}{\gamma'},\\[10pt]
\dfrac{N\, (p-1)}{N-p\, s}<\beta\le +\infty,\\[10pt]
1\le \gamma<\frac{N}{\alpha}.
\end{cases}
\]
The system prescribing possible values of $\theta$ can be explicitly solved, and we arrive at
\[
0\le \theta\le
\begin{cases}
p\left(1-\dfrac{\alpha}{N}\right)&\text{if $\dfrac{1}{p}>\max\left\{\dfrac{N-p\, s}{N\, (p-1)}, 1-\dfrac{\alpha}{N}\right\}$},\\[10pt]
1&\text{otherwise}.
\end{cases}
\]
Letting 
\[
\bar \sigma
=\begin{cases}
\dfrac{p\, s}{p-\bar\theta}&\text{if $p\ge 2$},\\[10pt]
\dfrac{2\, s}{2-\bar\theta}&\text{if $1<p<2$},
\end{cases}
\quad\text{where}\quad\bar\theta=\min\left\{1, p\left(1-\frac{\alpha}{N}\right)\right\}\, ,
\]
the conclusion follows.
\end{proof}
The next elementary lemma will be also employed.
\begin{lemma}
Suppose $0<\sigma<1$ and $1\le q\le p$. Then, for every measurable function $v:\R^N\to\R$ such that ${\rm supp}(v)\subseteq B_{R}$ one has
\begin{equation}\label{Bpq}
[v]_{{\cal B}^{\sigma}_{q, \infty}}\le C(N, p, q)\, R^{\frac{N}{q}-\frac{N}{p}}\, [v]_{{\cal B}^{\sigma}_{p, \infty}}.
\end{equation}
\end{lemma}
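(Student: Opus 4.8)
The plan is entirely elementary and rests on one observation: for a fixed displacement $h$, the function ${\sf d}_{h}v$ is supported on a set whose measure is controlled \emph{uniformly in $h$}. First I would record that, since ${\rm supp}(v)\subseteq B_{R}$, we have ${\rm supp}({\sf d}_{h}v)\subseteq B_{R}\cup(B_{R}-h)$ for every $h\in\R^{N}$, whence
\[
|{\rm supp}({\sf d}_{h}v)|\le 2\,|B_{R}|=2\,\omega_{N}\,R^{N},\qquad \omega_{N}:=|B_{1}|.
\]
The crucial point is that this bound does not degrade as $|h|\to+\infty$, so it is compatible with the supremum over $h$ that defines the Besov semi-norm.

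Next I would apply H\"older's inequality on the set $E_{h}:={\rm supp}({\sf d}_{h}v)$ with the conjugate exponents $p/q\ge 1$ and $p/(p-q)$: since $1\le q\le p$,
\[
\|{\sf d}_{h}v\|_{L^{q}}=\Big(\int_{E_{h}}|{\sf d}_{h}v|^{q}\,dx\Big)^{1/q}\le \|{\sf d}_{h}v\|_{L^{p}}\,|E_{h}|^{\frac1q-\frac1p}\le (2\,\omega_{N})^{\frac1q-\frac1p}\,R^{\frac Nq-\frac Np}\,\|{\sf d}_{h}v\|_{L^{p}}.
\]
Dividing by $|h|^{\sigma}$ and taking the supremum over $|h|>0$ then gives \eqref{Bpq} with $C(N,p,q)=(2\,\omega_{N})^{1/q-1/p}$; note in particular that the constant is independent of $\sigma$ and of $v$, and that the inequality holds trivially (both sides $+\infty$ or the right-hand side $+\infty$) when $[v]_{{\cal B}^{\sigma}_{p,\infty}}$ is not finite.

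There is essentially no obstacle here: the only step requiring a moment's care is precisely the uniform-in-$h$ measure bound on ${\rm supp}({\sf d}_{h}v)$, because a careless estimate growing with $|h|$ would be useless once fed into the supremum over $h$. The rest is a single application of H\"older's inequality on a set of finite measure. (The same argument, applied to second differences ${\sf d}^{2}_{h}v$ whose support lies in $B_{R}\cup(B_{R}-h)\cup(B_{R}-2h)$ of measure $\le 3\,\omega_{N}R^{N}$, gives the analogous inequality for $[\,\cdot\,]_{B^{\sigma}_{q,\infty}}$ for all $0<\sigma<2$, should it be needed.)
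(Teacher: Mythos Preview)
Your proof is correct and follows essentially the same idea as the paper's: bound $\|{\sf d}_h v\|_{L^q}$ by $\|{\sf d}_h v\|_{L^p}$ via H\"older's inequality on the support of ${\sf d}_h v$, which has measure controlled uniformly in $h$. The paper reaches the same conclusion through a case split (after scaling to $R=1$, it treats $|h|\ge 2$ and $|h|\le 2$ separately, using disjointness of supports in the first case and ${\rm supp}({\sf d}_h v)\subseteq B_4$ in the second), whereas your single observation $|{\rm supp}({\sf d}_h v)|\le 2\,|B_R|$ handles all $h$ at once and in fact yields the sharper constant $(2\,\omega_N)^{1/q-1/p}$ instead of $|B_4|^{1/q-1/p}$.
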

\begin{proof}
There is no loss of generality in assuming finite the right hand side of \eqref{Bpq} as well as, after a possible scaling, $R=1$. Observe that from ${\rm supp}(v)\subseteq B_{1}$ we infer
\[
\|{\sf d}_{h} v\|_{L^{r}}=2^{\frac{1}{r}}\, \|v\|_{L^{r}} \quad\text{provided}\;\; |h|\ge 2,\;\; r\geq 1.
\]
since $v(x)$ and $v(x+h)$ have disjoint supports. Via H\"older's inequality, this entails
\[
\|{\sf d}_{h} v\|_{L^{q}}=2^{\frac{1}{q}}\, \|v\|_{L^{q}}\le 2^{\frac{1}{q}}\, |B_{1}|^{\frac{1}{q}-\frac{1}{p}}\, \|v\|_{L^{p}}\leq 2^{\frac{1}{q}-\frac{1}{p}}\, |B_{1}|^{\frac{1}{q}-\frac{1}{p}}\,\|{\sf d}_{h} v\|_{L^{p}}
\]
for $|h|\ge 2$. If $|h|\le 2$ then ${\rm supp}({\sf d}_{h}v)\subseteq B_{4}$. Hence,
\[
\|{\sf d}_{h} v\|_{L^{q}}\le |B_{4}|^{\frac{1}{q}-\frac{1}{p}}\,\|{\sf d}_{h} v\|_{L^{p}}
\]
and taking suprema after multiplying by $|h|^{\sigma}$ completes the proof.
\end{proof}
\begin{proof}[Proof of Theorem \ref{maintheo}]
Consider first the case $\frac{N(p-1)}{N-s}<q\le p$. Inequality \eqref{decD} yields $[U]_{s,q}<+\infty$. Since $G_{\eps, \delta}$ is Lipschitz continuous with constant ${\rm Lip}(G_{\eps, \delta})=m_{\eps, \delta}$ while $m_{\eps, \delta}\le 2$ due to \eqref{theta}, after scaling one has
\[
[U_{\eps, \delta}]_{s, q}\le 2\, [U_{\eps}]_{s, q}=2\, \eps^{\frac{p\, s-N}{p}}\eps^{\frac{N-q\, s}{q}}\, [U]_{s,q}=C_{U}\, \eps^{\frac{N}{q}-\frac{N}{p}},
\]
which shows \eqref{st1}. Let  now $1<q\le\frac{N\, (p-1)}{N-s}$ and let $\bar\sigma$ be given by Lemma \ref{Beso}. Suppose, as we allow, $s<\bar\sigma<1$. If $\sigma\in \ ]s, \bar\sigma]$ then, thanks to \eqref{Bequiv}, the inequality $|{\sf d}_{h}U_{\eps, \delta}|\le 2\, |{\sf d}_{h} U_{\eps}|$ (due to ${\rm Lip}(G_{\eps, \delta})\leq 2$), and a scaling argument, we have
\[
[U_{\eps, \delta}]_{B^{\sigma}_{p, \infty}}\le 2\, [U_{\eps, \delta}]_{{\cal B}^{\sigma}_{p, \infty}}\le 4\, [U_{\eps}]_{{\cal B}^{\sigma}_{p, \infty}}\le \frac{4}{2-2^{\sigma}}\, [U_{\eps}]_{B^{\sigma}_{p, \infty}}\le \frac{4}{2-2^{\bar\sigma}}\, \eps^{s-\sigma}\, [U]_{B^{\sigma}_{p, \infty}}.
\]
Here, $C$ depends on $\bar\sigma$ alone. Consequently,
\[
[U_{\eps, \delta}]_{B^{\sigma}_{p, \infty}}\le C_{U}\,\eps^{s-\sigma},
\]
with $C_{U}<+\infty$ thanks to \eqref{best}. Pick any $t\in\left]\frac{N\, (p-1)}{N-s}, p\right[$. From ${\rm supp} (U_{\eps, \delta})\subseteq B_{\bar\theta\, \delta}$, \eqref{Bpq}, \eqref{Bequiv}, and the above inequality it follows
\begin{equation}\label{last}
[U_{\eps, \delta}]_{B^{\sigma}_{t, \infty}}\le C\, (\bar\theta\delta)^{\frac{N}{t}-\frac{N}{p}}\, [U_{\eps, \delta}]_{B^{\sigma}_{p, \infty}}\le \bar C_{U}\, \delta^{\frac{N}{t}-\frac{N}{p}}\, \eps^{s-\sigma}.
\end{equation}
Thus, Lemma 5.1 in \cite{MM} can be used, with summability exponents $q,t$ and differentiability parameters $s<\sigma$, to achieve 
\[
[U_{\eps, \delta}]_{s, q}\le C\, \delta^{\frac{N}{q}-\frac{N}{t}+\mu\, (\sigma-s)}\, [U_{\eps, \delta}]_{B^{\sigma}_{t, \infty}}^{\mu}\, [U_{\eps, \delta}]_{s, t}^{1-\mu}\quad\forall\,\mu\in\  ]0, 1[\, ,
\]
where $C$ depends on all parameters involved, except $\eps$ and $\delta$. The choice of $t$ ensures that we can estimate $[U_{\eps, \delta}]_{s, t}$ through \eqref{st1}, while \eqref{last} bounds $[U_{\eps, \delta}]_{B^{\sigma}_{t, \infty}}$, so that both are finite. Summing up, one has
\[
\begin{split}
[U_{\eps, \delta}]_{s, q}&\le \bar C_{U}\, \delta^{\frac{N}{q}-\frac{N}{t}+\mu\, (\sigma-s)}\, \delta^{\mu\, \big(\frac{N}{t}-\frac{N}{p}\big)}\, \eps^{\mu\, (s-\sigma)}\eps^{(1-\mu)\, \big(\frac{N}{t}-\frac{N}{p}\big)}\\
&=\bar C_{U}\, \delta^{\frac{N}{q}-\frac{N}{p}}\, \left(\frac{\eps}{\delta}\right)^{(1-\mu)\, \big(\frac{N}{t}-\frac{N}{p}\big)-\mu\, (s-\sigma)}
\end{split}
\]
with $\bar C_{U}$ depending on all the parameters except $\eps$ and $\delta$.
This shows \eqref{nu}: indeed, the exponent $(1-\mu)\, \big(\frac{N}{t}-\frac{N}{p}\big)-\mu\, (s-\sigma)$ turns out to be always less than
$\frac{N-p\, s}{p\, (p-1)}$ but can be made arbitrary close to it by simply choosing $\mu$ and $t-\frac{N\,(p-1)}{N-s}$ small enough.
\end{proof}


\begin{thebibliography}{99}

\bibitem{CMP}
{\sc P.\ Candito, S.\,A.\ Marano, K.\ Perera},
On a class of critical $(p,q)$-Laplacian problems, 
{\em NoDEA Nonlinear Differential Equations Appl.} {\bf 22} (2015), 1959--1972.

\bibitem{CSM}
{\sc W.\ Chen, S.\ Mosconi, M.\ Squassina},  Nonlocal problems with critical Hardy nonlinearity, {\em J. Funct. Anal.} {\bf 275} (2018), 3065--3114.

\bibitem{DH}
{\sc P.\ Dr\'abek, Y.\,X.\ Huang},
Multiplicity of positive solutions for some quasilinear elliptic equation in $\R^N$ with critical Sobolev exponent,
{\em J. Differential Equations} {\bf 140} (1997), 106--132.



\bibitem{LZ}
{\sc G.\ Li, G. Zhang}, Multiple solutions for the $p$\&$q$-Laplacian problem with critical exponent, {\em Acta Math. Sci. Ser. B}, Engl. Ed. {\bf 29} (2009), 903--918.


\bibitem{MM1} 
{\sc S.A. \ Marano, S.\ Mosconi},
Some recent results on the Dirichlet problem for $(p,q)$-Laplace equations,
{\em Discrete Contin. Dyn. Syst. Ser. S} {\bf 11} (2018), 279--291.

\bibitem{MM}
{\sc S.A.\ Marano, S.\ Mosconi}, 
Asymptotics for optimizers of the fractional Hardy-Sobolev inequality,
{\em Commun. Contemp. Math.} {\bf 21} (2019), 1850028.

\bibitem{MS}
{\sc P.\ Mironescu, W.\ Sickel},
A Sobolev non embedding,
{\em Atti Accad. Naz. Lincei Rend. Lincei Mat. Appl.} {\bf 26} (2015), 291--298.

\bibitem{MPSY}
{\sc S.\ Mosconi, K.\ Perera, M.\ Squassina, Y.\ Yang},
The Brezis-Nirenberg problem for the fractional $p$-Laplacian, 
{\em Calc. Var. Partial Differential Equations} {\bf 55} (2016), 55:105.

\bibitem{MSP}
{\sc S.\ Mosconi, M.\ Squassina},
Recent progresses in the theory of nonlinear nonlocal problems, {\em Bruno Pini Mathematical Analysis Sem.} {\bf 7} (2016), 147--164.

\bibitem{Y}
{\sc Y.\ Yang}, The Brezis-Nirenberg problem for the fractional $p$-Laplacian involving critical Hardy- Sobolen exponents, Arxiv preprint 1710.04654.

\bibitem{YY}
{\sc Z.\ Yang, H.\ Yin},
Multiplicity of positive solutions to a $p-q$-Laplacian equation involving critical nonlinearity,
{\em Nonlinear Anal.} {\bf 75} (2012), 3021--3035.

\end{thebibliography}
\end{document}